\newtheorem{thm}{Theorem}[section]
\newtheorem{conj}[thm]{Conjecture}
\newtheorem{lemma}[thm]{Lemma}
\newtheorem{proposition}[thm]{Proposition}
\theoremstyle{definition}
\title{Centralizers of coprime automorphisms of finite groups}
\author{Cristina Acciarri}
\address{(Cristina Acciarri)  Department of Mathematics, University of Brasilia,
Brasilia-DF, 70910-900 Brazil}
\email{acciarricristina@yahoo.it}
\author{Pavel Shumyatsky} 
\address{(Pavel Shumyatsky) Department of Mathematics, University of Brasilia,
Brasilia-DF, 70910-900 Brazil}
\email{pavel@unb.br}
\keywords{finite groups, associated Lie algebras, automorphisms, centralizers}
\subjclass[2010]{Primary 20D45;  Secondary 20F40}
\begin{document}
\begin{abstract}
Let $A$ be an elementary abelian group of order $p^{k}$ with $k\geq 3$ acting on a finite $p'$-group $G$. The following results are proved.

 If $\gamma_{k-2}(C_{G}(a))$ is nilpotent of class at most $c$ for any $a\in A^{\#}$, then $\gamma_{k-2}(G)$ is nilpotent and has $\{c,k,p\}$-bounded nilpotency class.

 If, for some integer $d$ such that $2^{d}+2\leq k$, the $d$th derived group of $C_{G}(a)$ is nilpotent of class at most $c$ for any $a\in A^{\#}$, then the $d$th derived group $G^{(d)}$ is nilpotent and has $\{c,k,p\}$-bounded nilpotency class. 

Earlier this was known only in the case where $k\leq 4$.
\end{abstract}

\maketitle
\section{Introduction}
Let $G$ be a group admitting an action by a group $A$. We denote by $C_G(A)$ the set $\{x\in G\, \mid\, x^a=x\mbox{ for any } a\in A\}$, the centralizer of $A$ in $G$.
In this paper we deal with the case where $A$ is a noncyclic elementary abelian $p$-group and $G$ is a finite $p'$-group. Let $A^\#$ denote the set of non-identity elements of $A$. It follows from the classification of finite simple groups that if $C_G(a)$ is soluble for any $a\in A^\#$, then so is the group
$G$ (see \cite{gushu}). The case $|A|\ge p^3$ does not require the classification: the result follows from Glauberman's theorem on soluble  signalizer functors \cite{Gla}. In certain specific situations much more can be said about the structure of $G$. 

Ward showed that if $A$ has rank at least 3, and
if $C_G(a)$ is nilpotent for any $a\in A^\#$, then 
the group $G$ is nilpotent \cite{W1}. Another of
Ward's results is that if $A$ has rank at least 4, and
if $C_G(a)'$ is nilpotent for any $a\in A^\#$, then 
the derived group $G'$ is nilpotent \cite{W2}.

Later the second author of the present paper found that if under these assumptions $C_G(a)$ is nilpotent of class at most $c$ (respectively $C_G(a)'$ is nilpotent of class at most $c$) for any $a\in A^\#$, then the nilpotency class of $G$ (respectively of $G'$) is $\{c,p\}$-bounded \cite{shu1}. Throughout the article we use the term ``$\{a,b,c,\dots\}$-bounded" to mean ``bounded from above by some function depending only on the parameters $a,b,c,\dots$".

Let us denote by $\gamma_{i}(H)$ the $i$th term of the lower central series of a group $H$ and by $H^{(i)}$ the $i$th term of the derived series of $H$. In  \cite{shu1} it was  conjectured that the above results should be a part of a more general phenomenon.    
\begin{conj}
\label{conj}
Let $A$ be an elementary abelian group of order $p^{k}$ with $k\geq 3$ acting on a finite $p'$-group $G$.
\begin{enumerate}
\item[\emph{(i)}] If $\gamma_{k-2}(C_{G}(a))$ is nilpotent of class at most $c$ for any $a\in A^{\#}$, then $\gamma_{k-2}(G)$ is nilpotent and has $\{c,k,p\}$-bounded nilpotency class.
\item[\emph{(ii)}] If, for some integer $d$ such that $2^{d}+2\leq k$, the $d$th derived group of $C_{G}(a)$ is nilpotent of class at most $c$ for any $a\in A^{\#}$, then the $d$th derived group $G^{(d)}$ is nilpotent and has $\{c,k,p\}$-bounded nilpotency class. 
\end{enumerate}
\end{conj}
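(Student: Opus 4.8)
The plan is to prove (i) and (ii) together, in each case first establishing the qualitative conclusion --- nilpotency of $\gamma_{k-2}(G)$, respectively of $G^{(d)}$ --- and then bounding its class by passing to an associated Lie ring. The condition $2^{d}+2\le k$ in (ii) enters because $H^{(d)}\le\gamma_{2^{d}}(H)$ for every group $H$, so that the $d$th-derived statement at rank $k$ is governed by the lower-central statement at level $2^{d}$; I therefore describe the argument for (i) and point out where (ii) differs.

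For the qualitative part I would argue by induction on $|G|$, using throughout that $C_{G/N}(a)=C_{G}(a)N/N$ for every $A$-invariant normal $N$. The reductions recalled in the introduction --- in particular Glauberman's theorem on soluble signalizer functors, available since $|A|\ge p^{3}$ --- let one assume $G$ soluble. A minimal counterexample is then cut down by the standard coprime-action machinery: Hall--Higman style bounds make the Fitting height $\{k,p\}$-bounded, and passing to quotients by terms of the Fitting series while feeding Ward's theorems \cite{W1,W2} into the bottom layers reduces matters to a group $G=PQ$ with $P=F(G)$ a $q_{1}$-group and $Q$ a $q_{2}$-group ($q_{1},q_{2}\neq p$). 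In that situation the hypothesis forces $\gamma_{k-2}(G)\le P$ (resp. $G^{(d)}\le P$), because $P$ is generated by the $C_{P}(a)$, $a\in A^{\#}$, and the action of $Q$ on these subgroups is restricted by the hypothesis on $\gamma_{k-2}(C_{G}(a))$. From now on the verbal subgroup in question is a nilpotent $A$-invariant subgroup of $G$, and only its class remains to be controlled.

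To bound the class I pass to a suitable associated Lie ring $L$ --- attached to an $A$-invariant section retaining the information about $\gamma_{k-2}(C_{G}(a))$, so that its nilpotency class governs that of the verbal subgroup --- on which $A$ acts coprimely; by the standard device relating fixed points of coprime automorphisms in a finite group and in its associated Lie ring, used already in \cite{shu1}, the hypothesis becomes: for every $a\in A^{\#}$, $\gamma_{k-2}(C_{L}(a))$ (resp. the $d$th derived subring of $C_{L}(a)$) is nilpotent of class $\le c$. Extending the coefficient ring to contain a primitive $p$th root of unity yields a $(\mathbb{Z}/p)^{k}$-grading $L=\bigoplus_{\chi\in\widehat{A}}L_{\chi}$ in which $L_{0}=C_{L}(A)$ and $C_{L}(a)$ is the subring $L_{H}=\bigoplus_{\chi(a)=1}L_{\chi}$ supported on a hyperplane $H$ of $\widehat{A}$. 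Everything is thereby reduced to an assertion about $(\mathbb{Z}/p)^{k}$-graded Lie rings: if $\gamma_{k-2}(L_{H})$ (resp. $L_{H}^{(d)}$, with $2^{d}+2\le k$) is nilpotent of class $\le c$ for every hyperplane $H$, then $\gamma_{k-2}(L)$ (resp. $L^{(d)}$) is nilpotent of $\{c,k,p\}$-bounded class.

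This graded Lie-ring assertion is the heart of the matter and, I expect, the main obstacle. The first step there is to show, by a Kreknin--Kostrikin type analysis --- choosing a maximal flag of subgroups of $\widehat{A}$ and iterating the relations $[L_{\chi},L_{\psi}]\subseteq L_{\chi+\psi}$ against the solvability of the hyperplane subrings --- that $\gamma_{k-2}(L)$ (resp. $L^{(d)}$) is solvable of $\{k,p\}$-bounded derived length; here the inequality $H^{(d)}\le\gamma_{2^{d}}(H)$ is exactly what lets the bookkeeping for (ii) reduce to that for (i). The second step upgrades bounded solvability to bounded nilpotency, combining a P.\ Hall type criterion with the local nilpotency supplied by the $L_{H}$ and an induction in the style of the treatment of the cases $k\le4$. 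The genuine difficulty --- and the reason the previously known cases stopped at $k\le4$ --- is the combinatorics of the $(p^{k}-1)/(p-1)$ hyperplanes of $(\mathbb{Z}/p)^{k}$: a long homogeneous commutator typically involves components whose degrees span all of $\widehat{A}$, so that no individual hyperplane hypothesis controls more than a bounded stretch of it, and a naive induction on $k$ loses a commutator length each time one descends to a hyperplane (replacing $\gamma_{k-2}$ by $\gamma_{k-3}$). Carrying this through requires a strengthened inductive statement bounding $\gamma_{m}(L)$ for a whole range of $m$ relative to the rank, and then combining the contributions of all hyperplanes uniformly rather than one flag at a time; organising that uniform assembly, with bounds depending only on $c$, $k$ and $p$, is where the real work lies.
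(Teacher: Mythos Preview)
Your proposal misidentifies where the difficulty lies. The graded Lie-ring assertion you call ``the heart of the matter'' --- that if $\gamma_{k-2}(C_L(a))$ (resp.\ $C_L(a)^{(d)}$) is nilpotent of class at most $c$ for all $a\in A^\#$, then $\gamma_{k-2}(L)$ (resp.\ $L^{(d)}$) has $\{c,k,p\}$-bounded class --- is not the obstacle: it is already Theorem~2.7 of \cite{shu1}, and the paper simply quotes it. The Kreknin--Kostrikin style combinatorics you worry about were settled there.

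The gap you do not address is the transfer back to the group. Take part~(ii): one sets $L=L(G^{(d)})$, so the class of $L$ equals that of $G^{(d)}$. The cited Lie theorem yields only that $L^{(d)}$ has bounded class~$e$; it says nothing about the class of $L$ itself, and knowing that $L^{(d)}$ has class $e$ while $L/L^{(d)}$ is soluble of length $d$ does not bound the class of $L$. This is exactly what stalled the conjecture, and the paper resolves it via the $A$-special subgroups of \cite{AS}. The key fact (Proposition~\ref{PAspecial}(3)) is that every $A$-special subgroup $H$ of degree $d$ lies in $C_G(B)^{(d)}$ for some $B\le A$ with $|A/B|\le p^{2^d}$; since $2^d+2\le k$, one has $B\cap A_j\ne 1$ for every maximal $A_j<A$, whence $[C_G(A_j),_{\,c+1}H]=1$. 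Inside $L$ one defines analogous $A$-subalgebras $R$, shows that each $L^{(l)}$ is spanned by those of level $l$ (Lemma~\ref{span}), deduces $[L,_{\,c+1}R]=0$ for every such $R$, and then bootstraps through the upper central series: $Z(L^{(d)})\le Z_r(L^{(d-1)})$ with $r=cn+1$ where $n$ is the ($\{k,p\}$-bounded) number of $A$-subalgebras of level $d-1$, hence $L^{(d)}\le Z_{er}(L^{(d-1)})$, and so on down to $L$. Note that this is where $2^d+2\le k$ genuinely enters; your proposed reduction of (ii) to (i) via $H^{(d)}\le\gamma_{2^d}(H)$ runs the wrong way, since nilpotency of $C_G(a)^{(d)}$ gives no control over the larger subgroup $\gamma_{2^d}(C_G(a))$.

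The qualitative nilpotency argument in the paper likewise rests on $A$-special subgroups, via the generation theorem for $A$-invariant Sylow subgroups of $G^{(d)}$ (Theorem~\ref{generation1}), together with a minimal-counterexample reduction to a unique minimal $A$-invariant normal subgroup. Your alternative route through bounded Fitting height and a $PQ$-reduction may be salvageable, but the step ``the hypothesis forces $\gamma_{k-2}(G)\le P$'' is not justified as written.
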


One indirect evidence in favor of the above conjecture is the result obtained in \cite{shu1} that the conjecture is true for Lie algebras. Yet, for long time it looked as if the Lie-theoretical result was of no help in dealing with groups. However a breakthrough has occured with the introduction in \cite{AS} of the concept of $A$-special subgroups of a group $G$. In the present paper we combine the use of the subgroups with the Lie-theoretical techniques to show, respectively in Theorems \ref{caso delta} and \ref{caso gamma}, that both parts of Conjecture \ref{conj} are true.  

In the next section we mention some standard results on the coprime action of finite groups. We also give the definition of $A$-special subgroups taken from \cite{AS} as well as remind the reader some general facts about Lie algebras associated with groups. In Section 3 these are used in the proof of Theorem \ref{caso delta}. In Section 4 we outline the proof of Theorem \ref{caso gamma}.

\section{Preliminaries}
We start this section with some well-known facts about coprime automorphisms of finite groups. The next two lemmas can be found in \cite[5.3.16, 6.2.2, 6.2.4]{GO}.

\begin{lemma}
\label{FG1} 
Let $A$ be a  group of automorphisms of the finite group $G$ with $(|A|,|G|)=1$. 
\begin{enumerate}
\item If $N$ is an  $A$-invariant normal  subgroup of $G$, then \\$C_{G/N}(A)=C_G(A)N/N$;
\item If $H$ is an $A$-invariant $p$-subgroup of $G$, then $H$ is contained in an $A$-invariant Sylow $p$-subgroup of $G$.
\end{enumerate}
\end{lemma}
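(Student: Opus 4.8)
The plan is to prove the two assertions separately; both are classical facts about coprime action recorded in \cite{GO}, so I shall indicate the line of argument rather than spell out every step. For (1), the inclusion $C_{G}(A)N/N\subseteq C_{G/N}(A)$ is immediate, since any element of $G$ fixed by every $a\in A$ remains fixed modulo the $A$-invariant subgroup $N$. For the reverse inclusion it is enough to show that an arbitrary $\bar g=gN\in C_{G/N}(A)$ already lies in $C_{G}(A)N/N$. Set $X=\langle g\rangle N$. Since $N$ is normal in $G$ and $g^{a}\in gN\subseteq X$ for every $a\in A$, the subgroup $X$ is $A$-invariant, and $A$ acts trivially on the cyclic quotient $X/N=\langle\bar g\rangle$; hence $[X,A]\subseteq N$. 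As $|X|$ divides $|G|$, the action of $A$ on $X$ is coprime, so the standard decomposition $X=C_{X}(A)\,[X,A]$ applies and gives $X=C_{X}(A)N$; thus $gN\in C_{X}(A)N/N\subseteq C_{G}(A)N/N$, as required.

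For (2) I would induct on the index $|G:H|$. If $H$ is already a Sylow $p$-subgroup of $G$ there is nothing to prove, because $H$ is $A$-invariant by hypothesis. Otherwise $H$ is properly contained in some Sylow $p$-subgroup $P$ of $G$; since normalizers grow in finite $p$-groups, $H$ is properly contained in $N_{P}(H)$, so $p$ divides the order of $N_{G}(H)/H$. The subgroup $N_{G}(H)$ is $A$-invariant, $H$ is an $A$-invariant normal subgroup of it, and $A$ acts coprimely on $N_{G}(H)/H$; hence, by the case $H=1$ (the existence of an $A$-invariant Sylow $p$-subgroup under a coprime action, itself proved by a short induction), there is an $A$-invariant Sylow $p$-subgroup $\bar Q$ of $N_{G}(H)/H$, and it is nontrivial by the previous remark. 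Its full preimage $Q$ in $N_{G}(H)$ is then an $A$-invariant $p$-subgroup of $G$ properly containing $H$, so $|G:Q|<|G:H|$, and applying the inductive hypothesis to $Q$ yields an $A$-invariant Sylow $p$-subgroup of $G$ containing $Q$, hence containing $H$.

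The point I would flag as needing care is, in (2), that a Sylow $p$-subgroup of $N_{G}(H)$ need not be a Sylow $p$-subgroup of $G$; this is precisely why one cannot pass to $N_{G}(H)$ in a single step but must enlarge $H$ gradually while inducting on $|G:H|$. In (1) there is no comparable difficulty once one has correctly isolated the cyclic section on which $A$ acts trivially so that $[X,A]\subseteq N$; the real content is then carried by the decomposition $X=C_{X}(A)[X,A]$, whose proof does use coprimality in an essential way (via a reduction to elementary abelian sections, on which the relevant cohomology vanishes), but that, together with the existence of $A$-invariant Sylow subgroups for coprime actions, belongs to the standard circle of results collected in \cite{GO}.
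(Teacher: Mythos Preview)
Your argument is correct and follows the standard textbook route. The paper itself does not prove this lemma at all; it merely records it as a well-known fact with a citation to \cite[5.3.16, 6.2.2, 6.2.4]{GO}, so there is nothing to compare against beyond noting that your sketch is essentially the argument one finds there.
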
 
 
\begin{lemma}
\label{FG2} 
Let $p$ be a prime, $G$ a finite $p'$-group acted on by an elementary abelian $p$-group $A$ of rank at least  $2$.\  Let $A_1, \dots,A_s$ be the maximal subgroups of $A$.\ If $H$ is an $A$-invariant subgroup of $G$ we have 
$H=\langle C_H(A_1),\dots,C_H(A_s)\rangle$. Furthermore if $H$ is nilpotent then $H=\prod_{i} C_{H}(A_{i})$.
\end{lemma}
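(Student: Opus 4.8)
The plan is to reduce both assertions to the case where $H$ is an elementary abelian $q$-group for some prime $q\neq p$, where they become statements about $\mathbb{F}_q A$-modules that follow from Maschke's and Schur's theorems.

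For the first assertion I would induct on $|H|$. Write $H_0=\langle C_H(A_1),\dots,C_H(A_s)\rangle$, an $A$-invariant subgroup of $H$; we must show $H_0=H$. If $|H|$ is not a prime power, then for each prime $q$ dividing $|H|$ choose, by Lemma \ref{FG1}(2), an $A$-invariant Sylow $q$-subgroup $Q$ of $H$; each such $Q$ is a proper $A$-invariant subgroup, so by induction $Q=\langle C_Q(A_1),\dots,C_Q(A_s)\rangle\le H_0$, and since the subgroup generated by all these Sylow subgroups has order divisible by $|H|$ we get $H=H_0$. So $H$ is a $q$-group. If $\Phi(H)\neq1$, then $\Phi(H)$ is $A$-invariant and Lemma \ref{FG1}(1) gives $C_{H/\Phi(H)}(A_i)=C_H(A_i)\Phi(H)/\Phi(H)$, so by induction $H/\Phi(H)=H_0\Phi(H)/\Phi(H)$ and hence $H=H_0\Phi(H)=H_0$. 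Thus $\Phi(H)=1$ and $V:=H$ is an elementary abelian $q$-group; since $H$ is a $p'$-group, $q\neq p$, so $V$ is an $\mathbb{F}_q A$-module to which Maschke's theorem applies, giving $V=\bigoplus_j V_j$ with each $V_j$ irreducible. As $A$ is abelian, $\End_{\mathbb{F}_q A}(V_j)$ is a finite division ring, hence a field, over which $V_j$ is one-dimensional, and $A$ acts on $V_j$ through a homomorphism into its multiplicative group. The image of $A$ is then a finite subgroup of the multiplicative group of a field, hence cyclic, and being a quotient of the elementary abelian group $A$ it has order $1$ or $p$; so the kernel of the action of $A$ on $V_j$ is either $A$ or one of the maximal subgroups $A_i$, and in either case $V_j\le C_V(A_i)$ for a suitable $i$. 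Therefore $V=\sum_j V_j\le H_0$, as required. (The hypothesis that $A$ has rank at least $2$ is what makes this non-vacuous: for cyclic $A$ the only maximal subgroup is trivial.)

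Next I would turn to the nilpotent case and show $H=\prod_i C_H(A_i)$. Writing $H$ as the direct product of its Sylow subgroups $H_q$, each of which is characteristic hence $A$-invariant, and noting $C_H(A_i)=\prod_q C_{H_q}(A_i)$, one reduces to the case where $H$ is a $q$-group. Now induct on $|H|$, the case $H=1$ being trivial. If $H\neq1$ then $Z(H)\neq1$, so choose a minimal $A$-invariant nontrivial subgroup $N$ of $\Omega_1(Z(H))$: it is central in $H$, $A$-invariant, and irreducible as an $\mathbb{F}_q A$-module, so by the analysis above there is an index $i_0$ with $N\le C_H(A_{i_0})$ (taking $i_0$ arbitrary if $A$ centralizes $N$). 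By induction and Lemma \ref{FG1}(1), $H/N=\prod_i C_{H/N}(A_i)=\prod_i C_H(A_i)N/N$, so every $h\in H$ has the form $h=c_1c_2\cdots c_s n$ with $c_i\in C_H(A_i)$ and $n\in N$. Since $N\le Z(H)$, the element $n$ commutes with everything, so $h=c_1\cdots c_{i_0-1}(c_{i_0}n)c_{i_0+1}\cdots c_s$ with $c_{i_0}n\in C_H(A_{i_0})$; hence $h\in\prod_i C_H(A_i)$, completing the induction.

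The coprime-action manipulations above are routine; the step that needs care is the choice of $N$ in the nilpotent case. It is essential to take $N$ central in $H$, not merely a minimal $A$-invariant normal subgroup, because it is the centrality of $N$ that allows the coset representative $n$ to be moved into the $C_H(A_{i_0})$-factor irrespective of where that factor sits in the product. A second, minor, point is that Schur's lemma is applied over the possibly non-splitting field $\mathbb{F}_q$, which is why Wedderburn's theorem (finite division rings are fields) is invoked.
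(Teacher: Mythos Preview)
Your argument is correct. Note, however, that the paper does not actually prove this lemma: it is quoted without proof from Gorenstein's textbook (references 5.3.16, 6.2.2, 6.2.4 there), so there is no ``paper's proof'' to compare against. What you have written is essentially the standard textbook argument---reduction to an elementary abelian $q$-section via Sylow and Frattini, followed by Maschke plus the observation that an abelian group acting irreducibly over a finite field has cyclic image---and it is carried out cleanly. One tiny remark: Lemma~\ref{FG1}(2) is stated for $G$, so when you invoke it to produce $A$-invariant Sylow subgroups of the $A$-invariant subgroup $H$, you are implicitly applying it with $H$ in the role of $G$; this is legitimate since $A$ acts coprimely on $H$, but it is worth saying explicitly.
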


In \cite{AS} we have introduced the concept of an \emph{$A$-special subgroup} of  a group $G$. The definition is as follows. 

Let $p$ be a prime and $A$ a finite elementary abelian $p$-group acting on a finite group $G$. Let $A_{1},\ldots,A_{s}$ be the subgroups of index $p$ in $A$ and $H$ a subgroup of $G$. We say that $H$ is an $A$-special subgroup of $G$ of degree $0$ if and only if $H=C_{G}(A_{j})$ for suitable $j\leq s$. Next, suppose that $i\geq 1$ and the $A$-special subgroups of $G$ of degree $i-1$ are already  defined. Then $H$ is an $A$-special subgroup of $G$ of degree $i$ if and only if there exist $A$-special subgroups $J_{1},J_{2}$ of $G$ of degree $i-1$ such that  $H=[J_{1},J_{2}]\cap C_{G}(A_{j})$ for suitable $j\leq s$.  

Here as usual $[J_{1},J_{2}]$ denotes the subgroup generated by all commutators $[x,y]$ where $x\in J_1$ and $y\in J_2$. Of course, the $A$-special subgroups of $G$ are always $A$-invariant. Assume that $A$ has order $p^k$. It is clear that for a given integer $i$ the number of $A$-special subgroups of $G$ of degree $i$ is $\{i,k,p\}$-bounded.  

Recall some properties satisfied by the $A$-special subgroups of $G$. The proofs can be found in \cite{AS}.

\begin{proposition}
\label{PAspecial}
Let $A$ be an elementary abelian $p$-group of order $p^{k}$  with $k\geq 2$ acting on  a finite $p'$-group $G$ and let $A_{1},\ldots,A_{s}$ be the maximal subgroups of $A$. Let $i\geq 0$ be an integer. 

\begin{enumerate}

\item If $i\geq 1$, then every $A$-special subgroup of $G$ of degree $i$ is contained in some $A$-special subgroup of $G$ of degree $i-1$.

\item Let $S_{i}$ be the subgroup generated by all $A$-special subgroups of $G$ of degree $i$. Then $S_{i}=G^{(i)}$.

\item If $2^{i}\leq k-1$ and $H$ is an $A$-special subgroup of $G$ of degree $i$, then $H$ is contained in the $i$th derived group of $C_{G}(B)$ for some subgroup  $B\leq A$ such that $|A/B|\leq p^{2^{i}}$.
 \end{enumerate} 
\end{proposition}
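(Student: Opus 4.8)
The plan is to establish the three parts by induction on the degree $i$, in the order (1), (3), (2), drawing on Lemmas \ref{FG1} and \ref{FG2} and on elementary commutator calculus.

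For part (1) I would argue by induction on $i$. If $i=1$ and $H=[J_1,J_2]\cap C_G(A_j)$ with $J_1,J_2$ of degree $0$, then $H\le C_G(A_j)$, which is an $A$-special subgroup of degree $0$. If $i\ge 2$, write $H=[J_1,J_2]\cap C_G(A_j)$ with $J_1,J_2$ of degree $i-1$; by the inductive hypothesis each $J_m$ lies in some $A$-special subgroup $M_m$ of degree $i-2$, and then $H\le [M_1,M_2]\cap C_G(A_j)$, which is an $A$-special subgroup of degree $i-1$. Part (3) is also an induction on $i$, the point being to control the relevant subgroups of $A$. For $i=0$ one has $H=C_G(A_j)$ with $|A/A_j|=p=p^{2^{0}}$. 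Let $i\ge 1$ with $2^{i}\le k-1$ and $H=[J_1,J_2]\cap C_G(A_j)$, $J_1,J_2$ of degree $i-1$. Since $2^{i-1}\le k-1$, the inductive hypothesis gives $B_1,B_2\le A$ with $|A/B_m|\le p^{2^{i-1}}$ and $J_m\le C_G(B_m)^{(i-1)}$. Put $B=B_1\cap B_2$; then $|A/B|\le|A/B_1|\cdot|A/B_2|\le p^{2^{i}}$, and $B\le B_m$ forces $C_G(B_m)\le C_G(B)$, so $J_m\le C_G(B)^{(i-1)}$ for $m=1,2$. Hence $[J_1,J_2]\le[C_G(B)^{(i-1)},C_G(B)^{(i-1)}]=C_G(B)^{(i)}$, and a fortiori $H\le C_G(B)^{(i)}$; note that $2^{i}\le k-1$ guarantees $|A/B|\le p^{k-1}$, i.e. $B\ne1$, so the conclusion is meaningful.

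Part (2) is the substantial one, again by induction on $i$. For $i=0$, Lemma \ref{FG2} applied to $G$ gives $S_0=\langle C_G(A_1),\dots,C_G(A_s)\rangle=G=G^{(0)}$. Assume $S_{i-1}=G^{(i-1)}$. If $H=[J_1,J_2]\cap C_G(A_j)$ is an $A$-special subgroup of degree $i$ then $J_1,J_2\le S_{i-1}=G^{(i-1)}$, so $H\le[G^{(i-1)},G^{(i-1)}]=G^{(i)}$, whence $S_i\le G^{(i)}$. For the reverse inclusion, observe first that for any degree-$(i-1)$ special subgroups $J_a,J_b$ the subgroup $[J_a,J_b]$ is $A$-invariant, so by Lemma \ref{FG2} $[J_a,J_b]=\langle\,[J_a,J_b]\cap C_G(A_l):l\le s\,\rangle$, each factor being an $A$-special subgroup of degree $i$; thus $[J_a,J_b]\le S_i$, and in particular $J_a'=[J_a,J_a]\le S_i$. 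Enumerating all degree-$(i-1)$ specials as $J_1,\dots,J_t$, so that $G^{(i-1)}=\langle J_1,\dots,J_t\rangle$, the standard identity $(G^{(i-1)})'=\langle [J_a,J_b],J_a':1\le a,b\le t\rangle^{G^{(i-1)}}$ shows $G^{(i)}\le S_i^{\,G^{(i-1)}}$; combined with $S_i\le G^{(i)}\trianglelefteq G^{(i-1)}$ this gives $G^{(i)}=S_i^{\,G^{(i-1)}}$.

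The remaining step — and, I expect, the main obstacle — is to show that $S_i$ is already normal in $G^{(i-1)}$, equivalently that $[S_i,J_c]\le S_i$ for every degree-$(i-1)$ special $J_c$; once this is known, $S_i^{\,G^{(i-1)}}=S_i$ and part (2) is complete. The natural route is to reduce, via the Hall--Witt identity and the three subgroups lemma, to commutators of the form $[[J_a,J_b],J_c]$ and absorb these into $S_i$ using that $[J_a,J_c],[J_b,J_c]\le S_i$; if this bookkeeping does not close up directly, one passes to $G/N$ for a minimal $A$-invariant normal subgroup $N$ (using Lemma \ref{FG1}(1) to identify the $A$-special subgroups of $G/N$ with the images of those of $G$) and argues by induction on $|G|$, treating the bottom layer $N$ separately. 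It is precisely this control of normal closures among the special subgroups that I anticipate to be the delicate part of the whole proposition.
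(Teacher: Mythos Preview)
The paper does not actually prove Proposition~\ref{PAspecial}: it is quoted from \cite{AS}, so there is no in-paper argument to compare against. I can therefore only assess your proposal on its own merits.

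Your arguments for parts (1) and (3) are correct and essentially the natural ones. In (3) your choice $B=B_1\cap B_2$ together with $|A/B|\le|A/B_1|\,|A/B_2|$ gives exactly the required bound, and the observation that $2^{i}\le k-1$ forces $B\ne 1$ is the right sanity check.

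For part (2) you have correctly isolated the genuine difficulty: from $[J_a,J_b]\le S_i$ one only gets $G^{(i)}=S_i^{\,G^{(i-1)}}$, and the problem is the normality of $S_i$. Two remarks on the strategies you sketch. First, the Three Subgroups route is circular as stated: to absorb $[[J_a,J_b],J_c]$ you are led to $[[J_b,J_c],J_a]\le[S_i,J_a]$, which is precisely what you do not yet control. Second, the induction on $|G|$ is more promising, and your claim that the $A$-special subgroups of $G/N$ are the images of those of $G$ is in fact correct: applying Lemma~\ref{FG1}(1) to the $A$-invariant group $[J_1,J_2]$ with normal subgroup $[J_1,J_2]\cap N$ gives
\[
[\bar J_1,\bar J_2]\cap C_{\bar G}(A_l)=C_{\overline{[J_1,J_2]}}(A_l)=\overline{[J_1,J_2]\cap C_G(A_l)} .
\]
This reduces you to a configuration with a unique minimal $A$-invariant normal subgroup $N\le G^{(i)}$ and $G^{(i)}=S_iN$; the outstanding task is to show $N\le S_i$ in that situation. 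That step does require an additional idea beyond what you have written (and is where the work in \cite{AS} lies), so your proposal is best described as a correct outline for (1) and (3) together with an honest identification of, but not a solution to, the crux of (2).
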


In \cite{AS} we have also established  the  following result about generation of  an $A$-invariant Sylow subgroup of $G^{(d)}$.

\begin{thm}
\label{generation1}
Let $A$ be an elementary abelian $p$-group of order $p^{k}$  with $k\geq 2$ acting on  a finite $p'$-group $G$. 
 Let  $r$ be a prime and $R$  an $A$-invariant Sylow $r$-subgroup of $G^{(d)}$, for some  integer $d\geq 0$. Let $R_{1},\ldots,R_{t}$ be the subgroups of the form $R\cap H$ where $H$ ranges through $A$-special subgroups of $G$ of degree $d$. Then $R=\langle R_{1},\dots,R_{t}\rangle$.
\end{thm}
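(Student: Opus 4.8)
The natural strategy is induction on $d$. For $d=0$ the $A$-special subgroups of degree $0$ are exactly $C_G(A_1),\dots,C_G(A_s)$, and since $R$ is itself an $A$-invariant subgroup of $G$, Lemma \ref{FG2} gives $R=\langle C_R(A_1),\dots,C_R(A_s)\rangle$; as $C_R(A_j)=R\cap C_G(A_j)$, this is exactly the assertion. For $d\geq 1$ put $D=G^{(d-1)}$, so that $G^{(d)}=[D,D]$, and use Lemma \ref{FG1}(2) to enlarge $R$ to an $A$-invariant Sylow $r$-subgroup $P$ of $D$; then $R=P\cap G^{(d)}$ and $P'\leq R$. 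By Proposition \ref{PAspecial}(2) the $A$-special subgroups $J_1,\dots,J_n$ of $G$ of degree $d-1$ generate $D$, and the inductive hypothesis applied to $D$ yields $P=\langle Q_1,\dots,Q_n\rangle$ with $Q_i=P\cap J_i$. For each pair $i,j$ one then has $[Q_i,Q_j]\leq[J_i,J_j]\cap P'\leq[J_i,J_j]\cap R$; since $[Q_i,Q_j]$ is $A$-invariant and nilpotent, Lemma \ref{FG2} gives $[Q_i,Q_j]=\prod_l\bigl([Q_i,Q_j]\cap C_G(A_l)\bigr)$, and each factor lies in $R\cap\bigl([J_i,J_j]\cap C_G(A_l)\bigr)=R\cap H$ for the corresponding $A$-special subgroup $H$ of degree $d$. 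Hence all the $[Q_i,Q_j]$ lie in the subgroup $N$ generated by the intersections $R\cap H$ with $H$ an $A$-special subgroup of $G$ of degree $d$.

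The remaining, and harder, task is to deduce $R=N$. This cannot be purely formal: $N$ need not contain $P'$ (the normal closure of the $[Q_i,Q_j]$ in $P$, rather than the subgroup they generate), and the analogous statement for an arbitrary finite group generated by subgroups $Q_i$ is false, so the coprime action has to intervene. I would run a minimal-counterexample argument, inducting on $|G|$. For a proper nontrivial $A$-invariant normal subgroup $M$ of $G$ one checks that the images in $G/M$ of the $A$-special subgroups of degree $d$ of $G$ are precisely the $A$-special subgroups of degree $d$ of $G/M$ (using the coprime lifting of Lemma \ref{FG1}(1) inside the $A$-invariant commutator subgroups occurring in the definition), and that $RM/M$ is again an $A$-invariant Sylow $r$-subgroup of $(G/M)^{(d)}$. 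If $M\cap G^{(d)}=1$, then $R\to RM/M$ is an isomorphism carrying $R\cap H$ onto $RM/M\cap HM/M$, so the minimal counterexample collapses; hence every minimal $A$-invariant normal subgroup of $G$ must lie in $G^{(d)}$. Passing to $G/M$ for such an $M$, and using the modular law together with the coprimality of $|A|$ and $|G|$ to identify $RM/M\cap HM/M$ with $(R\cap H)M/M$, the problem reduces to showing that $M$ itself lies in $N$.

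I expect this last point — forcing a minimal $A$-invariant normal subgroup $M\leq G^{(d)}$ into $N$ — to be the genuine obstacle, and the place where the coprime action is used in an essential way. When $M$ is an elementary abelian $r$-group it sits inside $R$, and one can argue with its decomposition as an $\mathbb{F}_rA$-module and with Lemma \ref{FG2} applied inside $M$; the case of an $r'$-group, or of a direct product of nonabelian simple groups, has to be treated separately, for example through the structure of $[R,M]$ and the recursive description of the $A$-special subgroups of degree $d$ as $[J_1,J_2]\cap C_G(A_l)$. Everything else is routine bookkeeping with Lemmas \ref{FG1} and \ref{FG2} and Proposition \ref{PAspecial}.
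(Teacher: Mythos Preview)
This paper does not contain a proof of Theorem~\ref{generation1}; the theorem is quoted verbatim from \cite{AS} (the line just above the statement reads ``In \cite{AS} we have also established the following result\ldots''), so there is nothing in the present paper to compare your attempt against.

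As for the proposal itself, it is not a proof but an outline that stops precisely at the nontrivial step. Your induction and the reduction to the subgroup $N=\langle R\cap H:\ H\text{ an $A$-special subgroup of degree }d\rangle$ are sensible, and the base case is correct. However, two points deserve emphasis. First, even granting that all the commutators $[Q_i,Q_j]$ lie in $N$, this controls at best $P'$ (and in fact only the subgroup generated by the $[Q_i,Q_j]$, which a priori is smaller than $P'$), whereas $R=P\cap G^{(d)}$ can properly contain $P'$; so the gap between ``the $[Q_i,Q_j]$ lie in $N$'' and ``$R=N$'' is larger than you indicate. Second, the minimal-counterexample reduction you sketch relies on the identification $RM/M\cap HM/M=(R\cap H)M/M$, which does not follow from the modular law alone; it needs the coprime action applied inside the $A$-invariant subgroup $[J_1,J_2]$ (via Lemma~\ref{FG1}(1)) to identify $C_{[J_1,J_2]M/M}(A_j)$ correctly, and even then one must separate the cases where $M$ is an $r$-group (so $M\leq R$) and where $M$ is an $r'$-group. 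You acknowledge that forcing the minimal $A$-invariant normal subgroup into $N$ is ``the genuine obstacle'' and leave it unresolved, so the proposal remains incomplete.
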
 

The proofs of the main results of the present paper are based on Lie techniques. Thus, we wish to recall here some useful Lie-theoretic machinery. Throughout the paper the term Lie algebra means Lie algebra over an associative ring with unity. Let $L$ be a Lie algebra and let $X,Y, X_{1},\ldots, X_{t}$ be subsets of $L$. We denote by $[X,Y]$ the subspace of $L$ spanned by the set $\{[x,y] \mid x\in X, y\in Y\}$  and we write $[X_{1},\ldots, X_{t}]$ for $[[X_{1},\ldots,X_{t-1}],X_{t}]$. If $t\geq 2$, we write $[X,_{\,t}Y]$ for $[[X,_{\,t-1}Y],Y]$. We denote by $\langle X\rangle$ the subalgebra of $L$ generated by $X$.

Let $G$ be a group and  let us denote by $\gamma_{i}$ the $i$th term of the lower central series of $G$. The associated Lie algebra $L(G)$ of the group $G$ is defined by  
\begin{equation*}
L(G)=\bigoplus_{i=1}^{\infty}\,\gamma_{i}/\gamma_{i+1},
\end{equation*} 
where we write additively the abelian groups $\gamma_{i}/\gamma_{i+1}$. 
Commutation in the group $G$ induces a well-defined binary operation with respect to which $L(G)$ becomes a Lie ring (Lie algebra over $\Bbb Z$). The details related to this construction can be found for example in \cite{Khu}.
If the group $G$ is nilpotent, then the Lie algebra $L(G)$ is also nilpotent and  has the same nilpotency class as $G$.

Given a subgroup $H$ of $G$, we can associate to $H$ the subalgebra  
$$
L(G,H)=\bigoplus_{i=1}^{\infty}\,(H\cap \gamma_{i})\gamma_{i+1}/\gamma_{i+1}.
$$

If a group $A$ acts on $G$, then $A$ acts naturally also on each quotient $\gamma_{i}/\gamma_{i+1}$ and this action extends uniquely to an action by automorphisms on the whole Lie algebra $L(G)$. Lemma \ref{FG1}(1) shows that if $(|A|,|G|)=1$, then 
$$C_{L(G)}(A)=\bigoplus_{i}\,C_{\gamma_{i}}(A)\gamma_{i+1}/\gamma_{i+1}.$$ Therefore in the case where $(|A|,|G|)=1$ we have $C_{L(G)}(A)=L(G,C_G(A))$.

Later on we will require the following lemma.
\begin{lemma}\label{span} Let $L$ be a Lie algebra such that $pL=L$ where $p$ is a prime, and let $A$ be a finite elementary abelian $p$-group acting by automorphisms on $L$. Let   $A_{1},\ldots,A_{s}$ be  the maximal subgroups of $A$.  Suppose that $L$ is generated by $A$-invariant subspaces $R_{1},\ldots,R_{t}$ with the  property that  for any integers $i,j$ and $k$ there exists some integer $m$ such that 
$$
[R_{i},R_{j}]\cap C_{L}(A_{k})\leq R_{m}.
$$ 
Then $L$ is spanned by $R_{1},\ldots,R_{t}$.
\end{lemma}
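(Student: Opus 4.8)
The plan is to show that the $\Z$-span (equivalently, since $pL=L$, the $\Z[1/p]$-span) of $R_1\cup\cdots\cup R_t$ is already a subalgebra, hence equals $L$. Write $M=R_1+\cdots+R_t$ for this span; since the $R_i$ generate $L$ as a Lie algebra, it suffices to prove that $M$ is closed under bracketing, and for that it is enough to check $[R_i,R_j]\subseteq M$ for all $i,j$, because then $[M,M]\subseteq M$ and $M$ is a subalgebra containing the generators. So the whole matter reduces to the inclusion $[R_i,R_j]\subseteq M$.

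The key tool is coprime decomposition along the maximal subgroups of $A$. The subspace $[R_i,R_j]$ is $A$-invariant (as $R_i,R_j$ are), and since $A$ is an elementary abelian $p$-group acting on the $\Z[1/p]$-module $L$ with $p$ invertible, I would invoke the analogue of Lemma \ref{FG2} at the level of modules: an $A$-invariant submodule $V$ of $L$ satisfies $V=\sum_{k=1}^s C_V(A_k)$, where $A_1,\dots,A_s$ are the maximal subgroups of $A$. (This is the standard fact that a coprime module for an abelian group is the sum of fixed-point submodules of the maximal subgroups; concretely one uses idempotents built from the group algebra, which live in $\Z[1/p][A]$.) Applying this to $V=[R_i,R_j]$ gives
\[
[R_i,R_j]=\sum_{k=1}^{s}\bigl([R_i,R_j]\cap C_L(A_k)\bigr).
\]
By hypothesis each summand $[R_i,R_j]\cap C_L(A_k)$ is contained in some $R_m$, hence in $M$. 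Therefore $[R_i,R_j]\subseteq M$, as required, and $M=L$.

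The one point that needs care — and which I expect to be the main (though modest) obstacle — is justifying the module decomposition $V=\sum_k C_V(A_k)$ in the generality stated, namely for a Lie algebra over an arbitrary associative unital ring in which $p$ is invertible, rather than over a field. Over a field of characteristic different from $p$ this is elementary representation theory of the abelian $p$-group $A$; in the stated generality one argues with the averaging idempotent $e=\frac{1}{|A|}\sum_{a\in A}a\in \Z[1/p][A]$, which makes sense because $pL=L$ forces $|A|$ to act invertibly on $L$, and with the analogous idempotents $e_k$ for the $A_k$; a short computation with these idempotents yields that every element of $V$ is a sum of elements fixed by some $A_k$. (Alternatively one can reduce to the case where $L$ is a finitely generated $\Z[1/p]$-module by working inside the submodule generated by finitely many relevant elements, and then decompose via primary components of $A$.) Once this decomposition is in hand, the rest of the argument is the one-line reduction above.
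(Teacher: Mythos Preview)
Your argument is correct and follows essentially the same route as the paper: both apply the decomposition $V=\sum_{k}C_{V}(A_{k})$ (the Lie-algebra analogue of Lemma~\ref{FG2}) to the $A$-invariant subspace $V=[R_i,R_j]$ and then invoke the hypothesis to land each piece in some $R_m$. Your framing is in fact slightly cleaner: you observe directly that $[R_i,R_j]\subseteq M$ for all $i,j$ makes $M=\sum_j R_j$ a subalgebra, whereas the paper reaches the same conclusion via an induction on the length $w$ of iterated commutators $[R_{i_1},\dots,R_{i_w}]$, an induction your observation renders unnecessary.
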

\begin{proof} Clearly, $L$ is a linear span of subspaces of the form $[R_{i_{1}},\ldots,R_{i_{w}}]$, where $R_{i_{1}},R_{i_{2}},\ldots,R_{i_{w}}$ are not necessarily distinct elements of $\{R_{1}.\ldots,R_{t}\}$. So choose $R_{i_{1}},R_{i_{2}},\ldots,R_{i_{w}}\in\{R_{1}.\ldots,R_{t}\}$ and put $R=[R_{i_{1}},\ldots,R_{i_{w}}]$. It is sufficient to show that $R$ is contained in $\sum_{j} R_{j}$. We argue by  induction on $w$. If $w=1$, then $R=R_{j}$, for some $j$ and there is nothing to prove.  Assume that $w\geq 2$ and put $R_{0}=[R_{i_{1}},\ldots,R_{i_{w-1}}]$. Thus $R=[R_{0},R_{i_{w}}]$. Since $R$ is an $A$-invariant subspace  it follows from Lemma \ref{FG2} that  $R=\sum_{\lambda\leq s} C_{R}(A_{\lambda})$. By the inductive hypothesis  $R_{0}\leq \sum_{j} R_{j}$. Therefore we have  
 \begin{equation*}
 \begin{split}
 C_{R}(A_{\lambda})=[R_{0},R_{i_{w}}]\cap C_{L}(A_{\lambda})&\leq [\sum_{j} R_{j},R_{i_{w}}]\cap C_{L}(A_{\lambda})\\
 &\leq \sum_{j}\big([R_{j},R_{i_{w}}]\cap C_{L}(A_{\lambda})\big).
 \end{split}
 \end{equation*}
By the hypothesis  each summand $[R_{j},R_{i_{w}}]\cap C_{L}(A_{\lambda})$ is contained in  $R_{m}$, for some integer $m$, and so it follows that $R\leq \sum_{j}R_{j}$, as desired. 
\end{proof}

\section{Proof of the Main Result}
The aim of this section is to prove  part (ii) of Conjecture \ref{conj}. 

\begin{thm}
\label{caso delta}
Let $c$ be a positive integer, $p$ a prime,  and  $A$  an elementary abelian group of order $p^{k}$ with $k\geq 3$ acting on a finite $p'$-group $G$.  If, for some integer $d$ such that $2^{d}+2\leq k$, the $d$th derived group of $C_{G}(a)$ is nilpotent of class at most $c$ for any $a\in A^{\#}$, then the $d$th derived group $G^{(d)}$ is nilpotent and has $\{c,k,p\}$-bounded nilpotency class. 
\end{thm}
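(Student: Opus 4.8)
The plan is to pass to the associated Lie algebra and exploit the fact that nilpotency of bounded class is detected Lie-theoretically. First I would make the standard reductions: it suffices to bound the nilpotency class of an $A$-invariant Sylow $r$-subgroup $R$ of $G^{(d)}$ for each prime $r$, since once every Sylow subgroup of $G^{(d)}$ is nilpotent of $\{c,k,p\}$-bounded class, $G^{(d)}$ itself is nilpotent (by coprime action, $G^{(d)}$ is generated by, hence equal to, the product of its $A$-invariant Sylow subgroups once it is nilpotent) of bounded class. By Proposition \ref{PAspecial}(3), since $2^{d}\le k-2\le k-1$, every $A$-special subgroup $H$ of $G$ of degree $d$ lies in $C_{G}(B)^{(d)}$ for some $B\le A$ with $|A/B|\le p^{2^{d}}\le p^{k-2}$; in particular $B\ne 1$, so picking $a\in B^{\#}$ gives $H\le C_{G}(a)^{(d)}$, which by hypothesis is nilpotent of class at most $c$. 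Hence every $A$-special subgroup of $G$ of degree $d$ is nilpotent of class $\le c$, and by Theorem \ref{generation1} the Sylow subgroup $R$ is generated by the subgroups $R_{j}=R\cap H_{j}$ as $H_{j}$ ranges over those (finitely many, $\{d,k,p\}$-boundedly many) $A$-special subgroups.

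Next I would move to $L=L(R)$, the associated Lie ring of $R$. Since $R$ is an $r$-group with $r\ne p$, we have $pL=L$ and $A$ acts coprimely on $L$, so $C_{L}(A)=L(R,C_{R}(A))$ and similarly for the maximal subgroups $A_{\lambda}$. Set $R_{j}^{*}=L(R,R_{j})$, an $A$-invariant subalgebra (in fact graded subspace) of $L$; since each $R_{j}$ is nilpotent of class $\le c$, each $R_{j}^{*}$ is a nilpotent subalgebra of class $\le c$. I want to apply Lemma \ref{span} to conclude that $L$ is spanned — not merely generated — by the $R_{j}^{*}$. For that I must verify the key hypothesis: for any $i,j$ and any maximal subgroup $A_{\lambda}$, $[R_{i}^{*},R_{j}^{*}]\cap C_{L}(A_{\lambda})$ is contained in some $R_{m}^{*}$. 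At the group level, $[H_{i},H_{j}]\cap C_{G}(A_{\lambda})$ is by definition an $A$-special subgroup of $G$ of degree $d+1$, and by Proposition \ref{PAspecial}(1) it lies inside some $A$-special subgroup of degree $d$, i.e. inside some $H_{m}$; intersecting with $R$ and passing to $L$ should yield $[R_{i}^{*},R_{j}^{*}]\cap C_{L}(A_{\lambda})\le R_{m}^{*}$, after checking that the graded-subspace operation $L(R,-)$ behaves well enough with respect to commutators and intersections with centralizers of $A_{\lambda}$. This compatibility is the technical heart and the step I expect to be the main obstacle: $L(R, [H_i,H_j])$ need not equal $[L(R,H_i),L(R,H_j)]$ in general, so one has to argue at the level of homogeneous components, using that $C_{L}(A_{\lambda})$ is homogeneous and that $R\cap \gamma_{n}(R)$ interacts predictably with the $A_\lambda$-centralizers under coprime action (Lemma \ref{FG1}(1)); this is presumably where the $A$-special machinery of \cite{AS} was designed to fit.

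Once Lemma \ref{span} applies, $L$ is spanned by finitely many ($\{c,d,k,p\}$-boundedly many) nilpotent subalgebras $R_{j}^{*}$ each of class $\le c$. Now I invoke the Lie-theoretic case of the conjecture, proved in \cite{shu1}: if a Lie algebra is spanned by boundedly many nilpotent subalgebras of bounded class that arise in this configuration, then the whole algebra is nilpotent of bounded class. (More precisely, one wants the Lie analogue applied to $L=L(G^{(d)})$ or to $L(R)$: the hypothesis that $\gamma^{(d)}$-type centralizer data is nilpotent of class $\le c$ translates, via $C_{L}(a)=L(C_{G}(a))$, into the Lie hypothesis, and the Lie theorem gives that $L^{(d)}$ — here all of $L$, since we already restricted to $R\le G^{(d)}$ — is nilpotent of $\{c,k,p\}$-bounded class.) Therefore $L(R)$ is nilpotent of $\{c,k,p\}$-bounded class $m$, whence $R$ is nilpotent of the same class $m$. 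Since $m$ does not depend on $r$, every $A$-invariant Sylow subgroup of $G^{(d)}$ is nilpotent of class $\le m$; consequently $G^{(d)}$ is nilpotent, and being the product of its Sylow subgroups it has nilpotency class $\le m$, which is $\{c,k,p\}$-bounded. This completes the proof modulo the verification of the span hypothesis, which I reiterate is the delicate point.
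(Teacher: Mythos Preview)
There are two genuine gaps.

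\textbf{Nilpotency of $G^{(d)}$ is never established.} Your opening reduction is circular: Sylow subgroups of \emph{any} finite group are nilpotent, so bounding the class of each $A$-invariant Sylow $r$-subgroup of $G^{(d)}$ says nothing about whether $G^{(d)}$ is nilpotent. The parenthetical ``once it is nilpotent'' concedes the point. In the paper this is a separate lemma (Lemma~\ref{nilpotency for G^d}) proved by a minimal-counterexample argument: Glauberman's signalizer functor theorem gives solubility of $G$, then one reduces to a unique minimal $A$-invariant normal subgroup $M$ (a $q$-group), picks an $A$-invariant Sylow $r$-subgroup $R$ of $G^{(d)}$ with $r\ne q$, and uses Theorem~\ref{generation1} together with Proposition~\ref{PAspecial}(3) to show $[M,R]=1$, forcing a contradiction. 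This step does not fall out of the Lie-theoretic argument.

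\textbf{The Lie theorem from \cite{shu1} only bounds the class of $L^{(d)}$, not of $L$.} Your key sentence ``the Lie theorem gives that $L^{(d)}$ --- here all of $L$, since we already restricted to $R\le G^{(d)}$ --- is nilpotent of $\{c,k,p\}$-bounded class'' is wrong: $R\le G^{(d)}$ does not make $L(R)=L(R)^{(d)}$; the derived series of $L(R)$ is computed inside $L(R)$ and has nothing to do with where $R$ sits in $G$. Likewise, ``a Lie algebra spanned by boundedly many nilpotent subalgebras of bounded class is nilpotent of bounded class'' is false in general and is not what \cite{shu1} proves. The paper works with $L=L(G^{(d)})$ (after Lemma~\ref{nilpotency for G^d}), applies \cite[Theorem~2.7(2)]{shu1} to get that $L^{(d)}$ has bounded class $e$, and then does substantial further work to climb from $L^{(d)}$ to $L$: it defines $A$-subalgebras of every level $l$, uses Lemma~\ref{span} to show each $L^{(l)}$ is \emph{spanned} by the level-$l$ $A$-subalgebras, derives from (\ref{relationdelta}) that $[L,_{\,c+1}R]=0$ for every $A$-subalgebra $R$, and finally runs an upper-central-series argument (with $Z=Z(L^{(d)})$ and $r=cn+1$) to show $L^{(d)}\le Z_{er}(L^{(d-1)})$, then iterates down through $L^{(d-1)},L^{(d-2)},\ldots$ to bound the class of $L$ itself. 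Your proposal contains the ingredients for the span step but omits this entire climb, which is where the bound on the class of $G^{(d)}$ actually comes from.

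The ``delicate point'' you flag (compatibility of $L(R,-)$ with commutators and centralizers) is real but minor compared with the two issues above; in the paper it is handled by working homogeneously and using Lemma~\ref{FG1}(1), and it feeds into the level-$l$ span statement rather than directly into the final bound.
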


First we wish to show that under the hypotheses of the above theorem the $d$th derived group $G^{(d)}$ is nilpotent. In what follows  we write $F(K)$ for the Fitting subgroup of a group $K$ and $O_{\pi}(K)$ for the maximal normal $\pi$-subgroup of $K$, where $\pi$ is a set of primes.

\begin{lemma}
\label{nilpotency for G^d}
Assume the hypotheses of Theorem \ref{caso delta}. Then $G^{(d)}$ is nilpotent. 
\end{lemma}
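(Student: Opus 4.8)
The plan is to show that $G^{(d)}$ is nilpotent by a standard reduction-to-a-single-prime argument combined with the signalizer functor machinery and Ward-type results. First I would pass to a minimal counterexample: choose $G$ of smallest order for which $G^{(d)}$ fails to be nilpotent under the hypotheses. Since the hypotheses are inherited by $A$-invariant sections (using Lemma~\ref{FG1}(1) to control centralizers in quotients), in a minimal counterexample $G^{(d)}$ cannot be nilpotent but $H^{(d)}$ is nilpotent for every proper $A$-invariant subgroup or quotient $H$. In particular one expects $G = G^{(d)}$, i.e. $G$ is perfect-like in the sense that $G^{(d)} = G$, after replacing $G$ by $G^{(d)}$; if $G^{(d)} \ne G$ then $G^{(d)}$ satisfies the same hypotheses (its centralizers are contained in those of $G$) and we are done by minimality.

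Next I would invoke the solubility statement quoted in the introduction: since $k \ge 3$ and each $C_G(a)$ is soluble (being an extension of a nilpotent group by a nilpotent group, as $C_G(a)^{(d)}$ is nilpotent of class at most $c$), Glauberman's soluble signalizer functor theorem gives that $G$ itself is soluble. Now I would argue that $F = F(G)$, the Fitting subgroup, is ``large'' — specifically that $C_G(F) \le F$. The goal is to reduce to the situation where $G = G^{(d)}$ has a normal $r$-subgroup for a single prime $r$ on which things can be analyzed, or to reach a contradiction directly. A cleaner route: apply Proposition~\ref{PAspecial}(3). Since $2^d \le k-2 \le k-1$, every $A$-special subgroup $H$ of $G$ of degree $d$ lies in $C_G(B)^{(d)}$ for some $B \le A$ with $|A/B| \le p^{2^d} \le p^{k-2}$, so $B \ne 1$ and contains some $a \in A^\#$; hence $H \le C_G(a)^{(d)}$, which is nilpotent of class at most $c$. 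Thus every $A$-special subgroup of $G$ of degree $d$ is nilpotent of bounded class, and by Proposition~\ref{PAspecial}(2), $G^{(d)}$ is generated by these.

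The heart of the nilpotency proof is then to show: a soluble $A$-invariant group generated by $A$-special subgroups of degree $d$, each of which is nilpotent of class $\le c$, has nilpotent $d$th derived group. For this I would fix a prime $r$ and an $A$-invariant Sylow $r$-subgroup $R$ of $G^{(d)}$; by Theorem~\ref{generation1}, $R = \langle R_1, \dots, R_t \rangle$ where each $R_i = R \cap H$ for $H$ an $A$-special subgroup of degree $d$, hence each $R_i$ is nilpotent of class $\le c$. One then wants to deduce that $R$ is normal in $G$ (equivalently, that $G^{(d)}$ has a normal Sylow $r$-subgroup for every $r$, i.e. $G^{(d)}$ is nilpotent). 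I would do this by induction on the Fitting height: pass to $\bar G = G / O_{r'}(G)$ and argue that $O_r(\bar G)$ is self-centralizing (as $\bar G$ is soluble with trivial $O_{r'}$), then show each $\bar R_i$ centralizes a sufficiently high term so that $\bar R_i \le O_r(\bar G)$, forcing $\bar R = \langle \bar R_i \rangle \le O_r(\bar G)$ to be normal. The main obstacle — and the step I would expect to need the most care — is precisely this last point: controlling how the nilpotent $A$-special pieces $R_i$ sit inside $G$ modulo $O_{r'}(G)$ and using coprime action (Lemma~\ref{FG2}, writing $R_i = \prod_\lambda C_{R_i}(A_\lambda)$ and relating $C_{R_i}(A_\lambda)$ back to centralizers $C_G(a)$) to force them into the Fitting subgroup. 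This is where the hypothesis $2^d + 2 \le k$ is genuinely used, to guarantee that the relevant subgroups $B \le A$ arising from Proposition~\ref{PAspecial}(3) are nontrivial so that the local nilpotency-of-class-$\le c$ hypothesis applies.
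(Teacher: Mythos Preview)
Your overall shape (minimal counterexample, Glauberman for solubility, then Theorem~\ref{generation1} and Proposition~\ref{PAspecial} to decompose an $A$-invariant Sylow subgroup of $G^{(d)}$) matches the paper's. Two points need correction, though. First, the reduction to $G=G^{(d)}$ is vacuous: once $G$ is soluble one always has $G^{(d)}<G$, and in any case applying minimality to $G^{(d)}$ would only yield that $(G^{(d)})^{(d)}$ is nilpotent, not $G^{(d)}$ itself. Second, and more importantly, you have not actually supplied the key step; your proposed route (pass to $G/O_{r'}(G)$ for each prime $r$ and run a Fitting-height induction to force $\bar R_i\le O_r(\bar G)$) is not what the paper does and is left entirely unproved in your sketch.

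The paper's argument is sharper. In the minimal counterexample there is a \emph{unique} minimal $A$-invariant normal subgroup $M$ (if there were two, their intersection is trivial and minimality applied to each quotient gives a contradiction). Then $M$ is an elementary abelian $q$-group, $(G/M)^{(d)}$ is nilpotent, and $O_{q'}(G)=1$. Pick an $A$-invariant Sylow $r$-subgroup $R$ of $G^{(d)}$ for some $r\neq q$, write $R=\langle R_1,\dots,R_t\rangle$ via Theorem~\ref{generation1} and $M=\prod_i M_i$ with $M_i=C_M(A_i)$ via Lemma~\ref{FG2}. The decisive trick is to find, for each pair $(i,j)$, a \emph{common} $a\in A^{\#}$ with $M_i\le C_G(a)$ and $R_j\le H_j\le C_G(a)^{(d)}\le F(C_G(a))$: one has $H_j\le C_G(B)^{(d)}$ with $|A/B|\le p^{2^d}$, and what is needed is $B\cap A_i\neq 1$ --- this is where $2^d+2\le k$ is genuinely used (your version, that $B\neq 1$, only requires $2^d\le k-1$). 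Then $\langle M_i,R_j\rangle$ is nilpotent, so $[M_i,R_j]=1$ since $q\neq r$, whence $[M,R]=1$. But $C_G(M)\cap G^{(d)}$ is nilpotent (as $G^{(d)}/M$ is), so its $q'$-part lies in $O_{q'}(G)=1$, contradicting $R\neq 1$.
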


\begin{proof} Suppose that the lemma is false and let $G$ be a counterexample of minimal order. Since the $d$th derived group of $C_{G}(a)$ is nilpotent, it follows that $C_{G}(a)$ is soluble for any $a\in A^{\#}$.  Therefore Glauberman's result  on soluble signalizer functors \cite{Gla}  implies that $G$ is soluble. Assume that $G$ has two  distinct minimal  $A$-invariant normal subgroups $M_{1}$ and $M_{2}$. By minimality  the image of $G^{(d)}$ in $G/M_{1}$ and in $G/M_{2}$ is nilpotent. Thus  the image of $G^{(d)}$ must be nilpotent in the quotient $G/(M_{1}\cap M_{2})$. This is a contradiction since $M_{1}\cap M_{2}=1$. 

Therefore $G$ has a unique minimal $A$-invariant normal subgroup $M$. Again the quotient $G^{(d)}/M$ is nilpotent. It is clear that $M$ is an elementary abelian $q$-group for some prime $q$. Let $A_{1},\ldots,A_{s}$ be the maximal subgroups of $A$. By Lemma \ref{FG2} $M=M_{1}M_{2}\cdots M_{s}$, where $M_{i}=C_{M}(A_{i})$ for $i\leq s$. Since $G^{(d)}$ is not nilpotent, it is not a $q$-group. Therefore by Lemma \ref{FG1}(2) $G^{(d)}$ contains  an $A$-invariant Sylow  $r$-subgroup  $R$ for some prime $r\neq q$. Theorem \ref{generation1} tells us that $R$ is generated by its intersections with $A$-special subgroups of degree $d$. Thus, $R=\langle R_{1},\ldots,R_{t}\rangle$, where $R_{j}=R\cap H_{j}$ for some $A$-special subgroup $H_{j}$ of $G$ of degree $d$. Now  fix  the integers $i$ and $j$ and consider the subgroup $\langle M_{i},R_{j}\rangle$. Since $2^{d}\leq k-1$ it follows from Proposition \ref{PAspecial}(3) that $H_{j}$ is contained in $C_{G}(B)^{(d)}$ for some subgroup $B$ of $A$ such that $|A/B|\leq p^{2^{d}}$. On the other hand $M_{i}\leq C_{G}(A_{i})$ and note that the intersection $B\cap A_i$ is not trivial. Therefore there exists $a\in A^{\#}$ such that $M_{i}\leq C_{G}(a)$ and $H_{j}\leq C_{G}(a)^{(d)}$. It follows that $H_{j}$ is contained in $F(C_{G}(a))$. Since $M_{i}$ is contained in a normal abelian subgroup of $G$ and also in $C_{G}(a)$, it follows that $\langle M_{i},R_{j} \rangle$ is nilpotent. Bearing in mind that $M$ is a $q$-group and $R$ is an $r$-group we deduce that $[M_{i},R_{j}]=1$ and this holds for any $i,j$. Recall that $M=M_{1}M_{2}\cdots M_{s}$ and $R=\langle R_{1},\ldots,R_{t}\rangle$. Therefore $[M,R]=1$.    

The fact that $G^{(d)}/M$ is nilpotent implies that also $C_{G}(M)\cap G^{(d)}$ is nilpotent. Hence, every $q'$-element of $C_{G}(M)\cap G^{(d)}$ belongs to $O_{q'}(G)$. On the other hand $O_{q'}(G)$ is trivial since $M$ is the unique minimal $A$-invariant normal subgroup of $G$. Thus, we obtain a contradiction as we have just shown that $R$ centralizes $M$. 
\end{proof}

\begin{proof}[Proof of Theorem  \ref{caso delta}]
By Lemma \ref{nilpotency for G^d} $G^{(d)}$ is nilpotent.   Let $L=L(G^{(d)})$ be the Lie algebra associated with $G^{(d)}$. Then $pL=L$ and $L$ has the same nilpotency class as $G^{(d)}$. The group $A$ naturally acts by automorphisms on the Lie algebra $L$. From the hypothesis that $C_{G}(a)^{(d)}$ is nilpotent of  class at most $c$ we obtain that $C_{L}(a)^{(d)}$ is nilpotent of class at most $c$ for any $a\in A^{\#}$.  

Let $K=L\otimes \mathbb{Z}[\omega]$, where $\omega$ is a primitive $p$th root of unity. Then for each $i\geq 0$ and $a\in A^{\#}$ we have
$$C_{K}(a)^{(i)}=C_{L}(a)^{(i)} \otimes \mathbb{Z}[\omega].$$ Hence, the nilpotency of $C_{L}(a)^{(d)}$  implies that also  $C_{K}(a)^{(d)}$ is nilpotent of class at most $c$ for any $a\in A^{\#}$.

We are in the position to apply Theorem 2.7(2) from \cite{shu1} and  conclude that  $K^{(d)}$ is nilpotent of $\{c,k,p\}$-bounded class. The same holds for $L^{(d)}$. Let us denote the nilpotency class of $L^{(d)}$ by $e$.

By Proposition \ref{PAspecial}(2) $G^{(d)}=\langle H_{1},H_{2},\ldots,H_t\rangle$, where $H_{i}$ are the $A$-special subgroups of $G$ of degree $d$. Since $2^{d}+2\leq k$, Proposition \ref{PAspecial}(3) tells us that each $A$-special subgroup  $H_{i}$  of $G$ of degree $d$ is contained in $C_{G}(B)^{(d)}$, for some  subgroup $B$ of $A$ such that $|A/B|\leq p^{2^{d}}$. Let $A_{1},\ldots,A_{s}$ be the maximal subgroups of $A$. For any $A_{j}$ the intersection $B\cap A_{j}$ is not trivial. Thus, there exists $a\in A^{\#}$ such that the centralizer $C_{G}(A_{j})$ is contained in $C_{G}(a)$ and  $H_{i}$ is contained in  $C_{G}(a)^{(d)}$. Since $C_{G}(a)^{(d)}$ is nilpotent of class at most $c$  we deduce that 
\begin{equation}
\label{relationdelta}
[C_{G}(A_{j}),_{\,c+1}H_{i}]=1.
\end{equation}

Next we define recursively  what will be called $A$-subalgebras of $L$. For each $A$-special subgroup $H_{i}$ of $G$ of degree $d$ we consider the corresponding  subalgebra $L(G^{(d)},H_{i})$ of $L$ and we define the  \textit{$A$-subalgebras} as follows: 
 
A subalgebra $R$ is an $A$-subalgebra of level $0$ if and only if $R=L(G^{(d)},H_j)$ for suitable $j\leq t$. Next, suppose that $l\geq 1$ and the $A$-subalgebras  of level $l-1$ are defined. Then $R$ is an $A$-subalgebra of level $l$ if and only if there exist $A$-subalgebras $R_1,R_2$ of level $l-1$ such that $R=[R_1,R_2]\cap C_{L}(A_j)$ for suitable $j\leq s$.  

It is clear that every $A$-subalgebra is $A$-invariant and is contained in $C_L(A_j)$ for some $j\leq s$. Since $G^{(d)}=\langle H_1,H_2,\ldots,H_{t}\rangle$ it follows that $L$ is generated by the $A$-subalgebras of level $0$. It is easy to check that if $R$ is an $A$-subalgebra of level $l$, then $G$ contains an $A$-special subgroup $H$ of degree $d+l$ such that $R\leq L(G^{(d)},H)$.
 
It follows from the definition and Proposition \ref{PAspecial}(1) that for any $A$-special subgroups $J_1$ and $J_2$ and for every $j\leq s$ there exists an $A$-special subgroup $J_3$ such that 
\begin{equation}
\label{propgroup}
[J_1,J_2]\cap C_{G}(A_{j})\leq J_3.
\end{equation}

From this we deduce the corresponding properties of $A$-subalgebras.   
  
 \begin{itemize}
 \item[(P1)] If $l\geq 1$, then every $A$-subalgebra of level $l$ is contained in some $A$-subalgebra of level $l-1$.
 
\item[(P2)] If $j\leq s$, then for any $A$-subalgebras $R_1,R_2$ of level $l$ there exists an $A$-subalgebra $R_3$ of the same level $l$ such that 
 $$
 [R_1,R_2]\cap C_{L}(A_{j})\leq R_3.
 $$  
  \end{itemize}
 
In the group $G$ we have the relation (\ref{relationdelta}). Therefore in the Lie algebra we have $[C_{L}(A_{j}),_{\,c+1}L(G^{(d)},H_{i})]=0$. Taking into account that every $A$-subalgebra is contained in some $L(G^{(d)},H_{i})$ and that $L=\sum_{j}C_{L}(A_{j})$ we deduce $[L,_{\,c+1}L(G^{(d)},H_{i})]=0$, and, in particular, 
 \begin{equation}
 \label{equ5}
[L,_{\,c+1}R]=0
 \end{equation} 
for every $A$-subalgebra $R$.

Now we wish to show that for any $l\geq 0$ the $l$th derived algebra $L^{(l)}$ is spanned by the $A$-subalgebras of level $l$. The property (P2) and Lemma \ref{span} show that this happens if and only if $L^{(l)}$ is generated by the $A$-subalgebras of level $l$. Since $L$ is generated by the $A$-subalgebras of level $0$, this is obvious for $l=0$. Now assume that $l\geq 1$ and use induction on $l$. The inductive hypothesis will be  that $L^{(l-1)}$ is spanned by the $A$-subalgebras of level $l-1$. Let $N$ be the subalgebra of $L$ generated by the $A$-subalgebras of level $l$. We already know that in fact $N$ is spanned by the $A$-subalgebras of level $l$. Let us show that actually $N$ is an ideal in $L^{(l-1)}$. Choose an $A$-subalgebra $R_1$ of level $l$ and an $A$-subalgebra $R_2$ of level $l-1$. The properties (P1) and (P2) show that $[R_1,R_2]\leq N$. Since $L^{(d-1)}$ is spanned by the $A$-subalgebras of level $l-1$, we conclude that indeed $N$ is an ideal in $L^{(l-1)}$. Note that $L^{(l-1)}/N$ is abelian. Thus  $L^{(l)}=[L^{(l-1)},L^{(l-1)}]\leq N$. On the other hand, by construction it is clear that  $N$ is contained in $L^{(l)}$. Hence, $N=L^{(l)}$.

We will now  prove that $L$ is nilpotent of  $\{c,k,p\}$-bounded class. 
Let $Z=Z(L^{(d)})$. Then $[Z,X,Y]=[Z,Y,X]$ for any subsets $X,Y$ of $L^{(d-1)}$.  Set $r=cn+1$, where $n$ is the number of $A$-subalgebras of level $d-1$ and note that $n$ is a $\{k,p\}$-bounded number. Since  $L^{(d-1)}$  is spanned by the $A$-subalgebras of level $d-1$, i.e., $L^{(d-1)}=\sum_{i\leq n} R_{i}$,  we  can write 
\begin{equation}
\label{equ6delta}
[Z,_{\,r}L^{(d-1)}]=\sum\,[Z,_{\,u_{1}}\,R_1,\ldots,_{\,u_{n}}R_{n}], 
\end{equation}
where $u_{1}+\cdots+u_{n}=r$ and $R_1,\ldots,R_{n}$ are the $A$-subalgebras of level $d-1$. The number $r$ is big enough to ensure that $u_{j}\geq c+1$ for some $j\leq n$. It follows from (\ref{equ5}) that  each summand in (\ref{equ6delta}) is equal to zero. Thus $[Z,_{r}L^{(d-1)}]=0$ and $Z\leq Z_{r}(L^{(d-1)})$, where $Z_{r}(L^{(d-1)})$ is the $r$th term of the upper central series of $L^{(d-1)}$. Now repeating this argument for $L^{(d-1)}/Z$, $L^{(d-1)}/Z_{2}(L^{(d)})$ and so on, we conclude that $L^{(d)}\leq Z_{er}(L^{(d-1)})$  and therefore $L^{(d-1)}$ is nilpotent of class at most $er+1$. After that we repeat the arguments for $L^{(d-2)}/Z(L^{(d-1)})$, $L^{(d-2)}/Z_{2}(L^{(d-1)})$ etc. After boundedly many repetitions we conclude that $L$ is nilpotent of  $\{c,k,p\}$-bounded class. 

Finally we remark that since the nilpotency class of $G^{(d)}$ equals that of $L$, the result follows. The proof is complete.
 \end{proof}
 
 
 \section{The other part of the conjecture}
In this section we will outline a proof of the following result.
 \begin{thm}
\label{caso gamma}
Let $A$ be an elementary abelian group of order $p^{k}$ with $k\geq 3$ acting on a finite $p'$-group $G$. If $\gamma_{k-2}(C_{G}(a))$ is nilpotent of class at most $c$ for any $a\in A^{\#}$, then $\gamma_{k-2}(G)$ is nilpotent and has $\{c,k,p\}$-bounded nilpotency  class.
\end{thm}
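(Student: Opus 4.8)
The plan is to mirror the architecture of the proof of Theorem~\ref{caso delta}, replacing the derived series by the lower central series and the notion of $d$th derived group by that of $\gamma_{k-2}$. First I would establish the analogue of Lemma~\ref{nilpotency for G^d}: under the hypotheses, $\gamma_{k-2}(G)$ is nilpotent. As before, take a counterexample $G$ of minimal order; the hypothesis forces every $C_G(a)$ to be soluble (since $\gamma_{k-2}(C_G(a))$ is nilpotent, hence soluble, hence $C_G(a)$ is soluble), so Glauberman's signalizer functor theorem gives that $G$ is soluble. A two-minimal-normal-subgroups argument reduces to the case of a unique minimal $A$-invariant normal subgroup $M$, an elementary abelian $q$-group, with $\gamma_{k-2}(G)/M$ nilpotent. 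If $\gamma_{k-2}(G)$ is not nilpotent it contains an $A$-invariant Sylow $r$-subgroup $R$ with $r\neq q$. Here the role previously played by Theorem~\ref{generation1} and by $A$-special subgroups of degree $d$ should be played by the corresponding generation result for an $A$-invariant Sylow subgroup of $\gamma_{k-2}(G)$ in terms of appropriate $A$-special-type subgroups (those approximating $\gamma_{k-2}$ of centralizers), established in \cite{AS}; writing $M=\prod_i C_M(A_i)$ and $R=\langle R_1,\dots,R_t\rangle$ with each $R_j$ inside such a subgroup $H_j$, one checks via the analogue of Proposition~\ref{PAspecial}(3)—using $k-2\le k-1$, which is where the hypothesis $k\ge3$ enters—that for fixed $i,j$ there is $a\in A^\#$ with $M_i\le C_G(a)$ and $H_j\le \gamma_{k-2}(C_G(a))$, hence $H_j\le F(C_G(a))$, so $\langle M_i,R_j\rangle$ is nilpotent and $[M_i,R_j]=1$. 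Summing gives $[M,R]=1$, and then $C_G(M)\cap\gamma_{k-2}(G)$ nilpotent together with $O_{q'}(G)=1$ yields the contradiction.

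With nilpotency of $\gamma_{k-2}(G)$ in hand, I would pass to $L=L(\gamma_{k-2}(G))$, which satisfies $pL=L$ and has the same class as $\gamma_{k-2}(G)$, and then tensor up to $K=L\otimes\mathbb{Z}[\omega]$ with $\omega$ a primitive $p$th root of unity, so that $\gamma_{k-2}(C_K(a))=\gamma_{k-2}(C_L(a))\otimes\mathbb{Z}[\omega]$ is nilpotent of class at most $c$ for every $a\in A^\#$. The Lie-algebra version of the conjecture, Theorem~2.7 of \cite{shu1} (the $\gamma$-part), applies to give that $\gamma_{k-2}(K)$, hence $\gamma_{k-2}(L)$, is nilpotent of $\{c,k,p\}$-bounded class; call this class $e$. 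Then, exactly as in Section~3, I would introduce a recursively defined family of ``$A$-subalgebras'' of $L$: level-$0$ ones being the $L(\gamma_{k-2}(G),H_j)$ for $H_j$ the relevant $A$-special subgroups approximating $\gamma_{k-2}$, and higher levels obtained by intersecting commutators with the $C_L(A_j)$. These satisfy the analogues of (P1) and (P2), so by Lemma~\ref{span} each term $\gamma_{k-2+l}(L)$ is spanned by the $A$-subalgebras of level $l$. The relation $[\gamma_{k-2}(C_G(a)),_{c+1}H_i]=1$ in the group translates into $[L,_{c+1}R]=0$ for every $A$-subalgebra $R$, by the same argument using $L=\sum_j C_L(A_j)$.

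The endgame is again a ``collection in the centre'' argument: set $Z=Z(\gamma_{k-2}(L))$; since $\gamma_{k-3}(L)$ is spanned by the boundedly many level-$(k-3)$ $A$-subalgebras $R_1,\dots,R_n$, expand $[Z,_{\,r}\gamma_{k-3}(L)]$ for $r=cn+1$ as a sum of terms $[Z,_{\,u_1}R_1,\dots,_{\,u_n}R_n]$ with $\sum u_i=r$; pigeonhole forces some $u_j\ge c+1$, so every term vanishes by $[L,_{c+1}R]=0$, giving $Z\le Z_r(\gamma_{k-3}(L))$. Iterating $e$ times (since $\gamma_{k-2}(L)\le Z_{er}(\gamma_{k-3}(L))$) bounds the class of $\gamma_{k-3}(L)$, and then descending from $\gamma_{k-3}$ to $\gamma_{k-4}$, \dots, down to $L=\gamma_1(L)$ through $k-3$ rounds bounds the class of $L$ by a $\{c,k,p\}$-bounded number, whence the same for $\gamma_{k-2}(G)$. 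I expect the main obstacle to be the first step—setting up the right replacement for Theorem~\ref{generation1} and for Proposition~\ref{PAspecial}(3) adapted to $\gamma_{k-2}$ rather than to the derived series, i.e.\ identifying the correct family of $A$-special-type subgroups whose generated subgroup is $\gamma_{k-2}(G)$ and which are swallowed by $\gamma_{k-2}$ of centralizers of small-index subgroups of $A$; once that combinatorial bookkeeping is in place, the Lie-algebra machinery transfers verbatim from Section~3.
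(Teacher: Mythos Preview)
Your approach is essentially the paper's own, including the nilpotency step, the passage to $L$ and $K$, the appeal to the Lie result of \cite{shu1}, and the ``collection in the centre'' endgame. Two small corrections are worth flagging. First, the recursive subalgebras must be the $\gamma$-version: level $l$ is obtained as $[R_1,C_L(A_j)]\cap C_L(A_m)$ with $R_1$ of level $l-1$, not as $[R_1,R_2]\cap C_L(A_j)$, so that the levels track the lower central series rather than the derived series; correspondingly the spanning lemma needed is the $\gamma$-analogue of Lemma~\ref{span} (hypothesis $[R_i,C_L(A_j)]\cap C_L(A_k)\le R_m$), which the paper records as Lemma~\ref{gammaspan}. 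Second, with that definition level $l$ spans $\gamma_l(L)$, not $\gamma_{k-2+l}(L)$; your endgame indexing (``$\gamma_{k-3}(L)$ spanned by level-$(k-3)$'') is the right one and your earlier formula should be adjusted to match. So the Lie machinery does not transfer quite ``verbatim'' from Section~3, but the required modifications are exactly parallel to the replacement of $A$-special subgroups by $\gamma$-$A$-special subgroups on the group side, which you already anticipated as the main bookkeeping.
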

The proof  of Theorem \ref{caso gamma} is very similar to that of Theorem \ref{caso delta}. Very often the changes that need to be done are quite obvious and therefore we omit many details. Most essential difference as compared with Theorem \ref{caso delta} is that the role of $A$-special subgroups will now be played by \textit{$\gamma$-$A$-special subgroups} of $G$. These were introduced in \cite{AS}. Let us recall the definition. 
 
Let $p$ be a prime and $A$ a finite elementary abelian $p$-group acting on a finite 
group $G$. Let $A_{1},\ldots,A_{s}$ be the subgroups of index $p$ in $A$ and $H$ a subgroup of $G$. We say that $H$ is a $\gamma$-$A$-special subgroup of $G$ of degree $1$ if and only if $H=C_{G}(A_{j})$ for suitable $j\leq s$. Next, suppose that $i\geq 2$ and the $\gamma$-$A$-special subgroups of $G$ of degree $i-1$ are already defined. Then $H$ is a $\gamma$-$A$-special subgroup of $G$ of degree $i$ if and only if there exists a $\gamma$-$A$-special subgroup $J$ of $G$ of degree $i-1$ such that  $H=[J,C_{G}(A_{j})]\cap C_{G}(A_{n})$ for suitable $j,n\leq s$. Note that for a given integer $i$ the number of $\gamma$-$A$-special subgroups of $G$ of degree $i$ is $\{i,k,p\}$-bounded. The following properties of $\gamma$-$A$-special subgroups have been established in \cite{AS}.

\begin{proposition}
\label{gammaPAspecial}
Let $A$ be an elementary abelian $p$-group of order $p^{k}$ with $k\geq 2$ acting on a finite $p'$-group $G$
and $A_{1},\ldots,A_{s}$ the maximal subgroups of $A$. Let $i\geq 1$ be an integer. 

\begin{enumerate}
\item If $i\geq 2$, then every $\gamma$-$A$-special subgroup of $G$ of degree $i$ is contained in some $\gamma$-$A$-special subgroup of $G$ of degree $i-1$.

\item Let $S_{i}$ be the subgroup generated by all $\gamma$-$A$-special subgroups of $G$ of degree $i$. Then $S_{i}=\gamma_{i}{(G)}$.

\item If $i\leq k-1$ and $H$ is a $\gamma$-$A$-special subgroup of $G$ of degree $i$, then $H\leq \gamma_{i}(C_{G}(B))$ for some subgroup  $B\leq A$ such that $|A/B|\leq p^{i}$.

 \end{enumerate}  
  
\end{proposition}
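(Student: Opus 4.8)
The plan is to establish the three assertions simultaneously by induction on the degree $i$, with Lemma~\ref{FG2} (coprime generation by centralizers) as the recurring tool; the argument parallels the one behind Proposition~\ref{PAspecial}, the only structural change being that a $\gamma$-$A$-special subgroup is formed from a \emph{single} predecessor commutated against a centralizer $C_G(A_j)$, mirroring $\gamma_i(G)=[\gamma_{i-1}(G),G]$ rather than $G^{(i)}=[G^{(i-1)},G^{(i-1)}]$. Parts (1) and (3) are clean inductions, while part (2) is the crux. For part (1), if $i=2$ and $H=[J,C_G(A_j)]\cap C_G(A_n)$ with $J$ of degree $1$, then $H\le C_G(A_n)$, and $C_G(A_n)$ is itself a $\gamma$-$A$-special subgroup of degree $1=i-1$; nothing more is needed. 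For $i\ge 3$, write $H=[J,C_G(A_j)]\cap C_G(A_n)$ with $J$ of degree $i-1\ge 2$, apply the inductive hypothesis to $J$ to get a $\gamma$-$A$-special subgroup $J^{*}$ of degree $i-2$ with $J\le J^{*}$, and conclude $H\le [J^{*},C_G(A_j)]\cap C_G(A_n)$, a $\gamma$-$A$-special subgroup of degree $i-1$ by definition.

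For part (2) I again induct on $i$. The base case $i=1$ is precisely Lemma~\ref{FG2}: the degree-$1$ subgroups are the $C_G(A_j)$, and they generate $G=\gamma_1(G)$. The inclusion $S_i\le\gamma_i(G)$ is immediate, since every degree-$i$ subgroup lies in $[J,C_G(A_j)]\le[\gamma_{i-1}(G),G]=\gamma_i(G)$, using $J\le\gamma_{i-1}(G)$. For the reverse inclusion I first reduce to centralizers: as $\gamma_i(G)$ is $A$-invariant, Lemma~\ref{FG2} gives $\gamma_i(G)=\langle\,\gamma_i(G)\cap C_G(A_n):n\le s\,\rangle$, so it suffices to place each $\gamma_i(G)\cap C_G(A_n)$ inside $S_i$. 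Writing $\gamma_i(G)=[\gamma_{i-1}(G),G]$ with $\gamma_{i-1}(G)=S_{i-1}$ generated by the degree-$(i-1)$ subgroups $J_1,\dots,J_m$ (inductive hypothesis) and $G=\langle C_G(A_1),\dots,C_G(A_s)\rangle$, the task becomes to express $[\gamma_{i-1}(G),G]\cap C_G(A_n)$ through the subgroups $[J_l,C_G(A_j)]\cap C_G(A_n)$, which are exactly the degree-$i$ subgroups.

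This last step is the main obstacle. A naive expansion $\langle [J_l,C_G(A_j)]:l,j\rangle$ need not be normal in $G$, because a conjugate $C_G(A_j)^{g}$ of a centralizer is not a centralizer, so one cannot simply identify this subgroup with $\gamma_i(G)$. The correct engine is a coprime commutator-generation lemma, the group-theoretic analogue of Lemma~\ref{span}, established in \cite{AS}: for $A$-invariant subgroups $X=\langle X_a\rangle$ and $Y=\langle Y_b\rangle$ with all $X_a,Y_b$ $A$-invariant, one has $[X,Y]\cap C_G(A_n)=\langle\,[X_a,Y_b]\cap C_G(A_n):a,b\,\rangle$ for every $n$. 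Exactly as in the proof of Lemma~\ref{span}, this is proved by induction on commutator length, splitting each $A$-invariant commutator subgroup via Lemma~\ref{FG2} and feeding it back in; the intersections with $C_G(A_n)$ are what dissolve the conjugation ambiguity. Applying it with $X=\gamma_{i-1}(G)$, $X_a=J_a$, $Y=G$, $Y_b=C_G(A_b)$ gives $\gamma_i(G)\cap C_G(A_n)\le S_i$, completing (2).

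Finally, for part (3) I induct on $i$, with $i\le k-1$ throughout. For $i=1$, $H=C_G(A_j)$, so $B=A_j$ works: $|A/B|=p$ and $H=\gamma_1(C_G(B))$. For $i\ge 2$, write $H=[J,C_G(A_j)]\cap C_G(A_n)$ with $J$ of degree $i-1$; the inductive hypothesis supplies $B'\le A$ with $|A/B'|\le p^{i-1}$ and $J\le\gamma_{i-1}(C_G(B'))$. Put $B=B'\cap A_j$, so that $|A/B|\le|A/B'|\,|A/A_j|\le p^{i}$, and $i\le k-1$ forces $B\ne 1$. Since $B\le B'$ we have $C_G(B')\le C_G(B)$, hence $J\le\gamma_{i-1}(C_G(B))$; and $B\le A_j$ gives $C_G(A_j)\le C_G(B)$. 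Therefore $H\le[J,C_G(A_j)]\le[\gamma_{i-1}(C_G(B)),C_G(B)]=\gamma_i(C_G(B))$, which is exactly what (3) demands.
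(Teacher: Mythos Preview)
The paper does not give a proof of this proposition at all: it is quoted verbatim from \cite{AS}, with the line ``The following properties of $\gamma$-$A$-special subgroups have been established in \cite{AS}.'' So there is no in-paper argument to compare against; your outline is in effect a reconstruction of the proof from \cite{AS}.

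Your arguments for parts (1) and (3) are correct and are exactly the natural inductions. For part (2) you correctly isolate the only real difficulty, namely that the subgroup $\langle [J_l,C_G(A_j)]\rangle$ need not be normal in $G$, so one cannot simply identify it with $\gamma_i(G)$; and you correctly point to a commutator-generation lemma from \cite{AS} as the device that resolves this. One caution: the lemma as you have stated it, for arbitrary $A$-invariant generating families $\{X_a\}$, $\{Y_b\}$, is stronger than what a direct ``induction on commutator length'' in the style of Lemma~\ref{span} actually yields in a group, because group commutators, unlike Lie brackets, do not distribute over generation without conjugation. The version proved in \cite{AS} that suffices here uses the additional structure present in your application (in particular that $X=\gamma_{i-1}(G)$ is normal in $G$ and that all the pieces are $A$-invariant, so the ``feeding back in'' step via Lemma~\ref{FG2} genuinely closes up). With that caveat, your plan matches the approach in \cite{AS}.
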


We will also require the following analogue of Lemma \ref{span}.
\begin{lemma}
\label{gammaspan}
Let $L$ be a Lie algebra such that $pL=L$ where $p$ is a prime, and let $A$ be a finite elementary abelian $p$-group acting by automorphisms on $L$. Let   $A_{1},\ldots,A_{s}$ be  the maximal subgroups of $A$.  Suppose that $L$ is generated by $A$-invariant subspaces $R_{1},\ldots,R_{t}$ with the  property that  for any integers $i,j$ and $k$ there exists some integer $m$ such that 
$$
[R_{i},C_{L}(A_{j})]\cap C_{L}(A_{k})\leq R_{m}.
$$ 
Then $L$ is spanned by $R_{1},\ldots,R_{t}$.
\end{lemma}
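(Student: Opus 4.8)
The plan is to imitate the proof of Lemma \ref{span}, the only new ingredient being one extra reduction, needed because the hypothesis now controls a commutator $[R_i,X]$ only when $X$ has the form $C_L(A_n)$. Concretely, I would show directly that the subspace $V=R_1+\cdots+R_t$ is a subalgebra of $L$: since $V$ contains the generating set $\{R_1,\ldots,R_t\}$, this forces $V=L$, which is precisely the claim that $L$ is spanned by $R_1,\ldots,R_t$. To prove that $V$ is closed under the bracket it is enough to show $[R_i,R_j]\leq V$ for all $i,j$.

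The key step is to peel the second factor of a bracket apart into $C_L(A_n)$-pieces. Since $R_j$ is $A$-invariant, it follows from Lemma \ref{FG2}, exactly as in the proof of Lemma \ref{span}, that $R_j=\sum_n C_{R_j}(A_n)\leq\sum_n C_L(A_n)$, whence $[R_i,R_j]\leq\sum_n[R_i,C_L(A_n)]$. Each subspace $[R_i,C_L(A_n)]$ is again $A$-invariant, so the same fact gives $[R_i,C_L(A_n)]=\sum_{\lambda\leq s}\bigl([R_i,C_L(A_n)]\cap C_L(A_\lambda)\bigr)$, and by hypothesis each term here is contained in some $R_m$, hence in $V$. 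Therefore $[R_i,C_L(A_n)]\leq V$ for every $n$, and so $[R_i,R_j]\leq V$, as wanted. If one prefers to follow the proof of Lemma \ref{span} literally, one instead writes $L$ as a linear span of left-normed commutators $[R_{i_1},\ldots,R_{i_w}]$ and induces on $w$, performing the same two reductions on $R=[[R_{i_1},\ldots,R_{i_{w-1}}],R_{i_w}]$.

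I do not anticipate a genuine obstacle here: the whole argument is a routine manipulation once one notices that the second factor of a bracket must be decomposed into $C_L(A_n)$-pieces before the hypothesis can be brought to bear. The only ingredient that is not completely formal is the use of Lemma \ref{FG2} for $A$-invariant subspaces of $L$ (it relies only on $pL=L$), but this is standard and is already exploited in the proof of Lemma \ref{span}.
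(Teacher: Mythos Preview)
Your proposal is correct. The paper does not actually supply a proof of Lemma~\ref{gammaspan}; it merely records the statement as ``the following analogue of Lemma~\ref{span}'', leaving the reader to adapt the earlier argument. Your adaptation is exactly the expected one: the only new wrinkle compared with Lemma~\ref{span} is that the hypothesis controls $[R_i,C_L(A_n)]\cap C_L(A_\lambda)$ rather than $[R_i,R_j]\cap C_L(A_\lambda)$, and you handle this by first decomposing the $A$-invariant subspace $R_j$ as $\sum_n C_{R_j}(A_n)$ before invoking the hypothesis. Framing the argument as ``$V=\sum_j R_j$ is a subalgebra containing a generating set'' is a mild streamlining of the induction on commutator length used in Lemma~\ref{span}, but the content is identical; your parenthetical remark about running the literal induction on $w$ would also go through without change.
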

 
Now we sketch out the proof of Theorem \ref{caso gamma}.

\begin{proof}[Proof of Theorem \ref{caso gamma}] First, we notice that $\gamma_{k-2}(G)$ is nilpotent. The proof of the nilpotency of $\gamma_{k-2}(G)$ is  similar to that of Lemma \ref{nilpotency for G^d}. Next, we let $L=L(\gamma_{k-2}(G))$ be the Lie algebra associated with $\gamma_{k-2}(G)$. Then $pL=L$ and $L$ has the same nilpotency class as $\gamma_{k-2}(G)$. The group $A$ naturally acts by automorphisms on $L$ and, since $\gamma_{k-2}(C_{G}(a))$ is nilpotent of class at most $c$, it follows that $\gamma_{k-2}(C_{L}(a))$ is nilpotent of class at most $c$ for any $a\in A^{\#}$. Put  $K=L\otimes \mathbb{Z}[\omega]$, where $\omega$ is a primitive $p$th root of unity.  The nilpotency of $\gamma_{k-2}(C_{L}(a))$ implies that also  $\gamma_{k-2}(C_{K}(a))$ is nilpotent of class at most $c$ for any $a\in A^{\#}$. Theorem 2.7(1) of \cite{shu1} now tells us that $\gamma_{k-2}(K)$ is nilpotent of $\{c,k,p\}$-bounded class. Hence, also the nilpotency class of $\gamma_{k-2}(L)$ is $\{c,k,p\}$-bounded. We denote the nilpotency class of $\gamma_{k-2}(L)$ by  $e$. 

Let $H_1,H_2,\ldots,H_t$ be the $\gamma$-$A$-special subgroups of $G$ of degree $k-2$. By Proposition \ref{gammaPAspecial}(2) $\gamma_{k-2}(G)=\langle H_{1},H_{2},\ldots,H_t\rangle$. Since $k-2\leq k-1$, Proposition \ref{gammaPAspecial}(3) tells us that each subgroup  $H_{i}$ is contained in $\gamma_{k-2}(C_{G}(B))$ for some  subgroup $B$ of $A$ such that $|A/B|\leq p^{k-2}$. Let $A_{1},\ldots,A_{s}$ be the maximal subgroups of $A$.  For any $A_{j}$ the intersection $B\cap A_{j}$ is not trivial. Thus, there exists $a\in A^{\#}$ such that the centralizer $C_{G}(A_{j})$ is contained in $C_{G}(a)$ and  $H_{i}$ is contained in  $\gamma_{k-2}(C_{G}(a))$. Since $\gamma_{k-2}(C_{G}(a))$ is nilpotent of class at most $c$, we have 
\begin{equation}
\label{relation}
[C_{G}(A_{j}),_{\,c+1}H_{i}]=1.
\end{equation}

Next, we define recursively what will be called  $\gamma$-$A$-subalgebras of $L$. The definition is similar to that of $A$-subalgebras used in the previous section.

For each $\gamma$-$A$-special subgroup $H_{i}$ of $G$ of degree $k-2$ we consider the corresponding subalgebra $L(\gamma_{k-2}(G),H_{i})$. A subalgebra $R$ is a $\gamma$-$A$-subalgebra of level $1$ if and only if there exists $j\leq t$ such that $R=L(\gamma_{k-2}(G),H_j)$. Further, suppose that $l\geq 2$ and the $\gamma$-$A$-subalgebras of level $l-1$ are already defined. Then $R$ is a $\gamma$-$A$-subalgebra of level $l$ if and only if there exists a $\gamma$-$A$-subalgebra $R_1$ of level $l-1$ such that $R=[R_1,C_{L}(A_{j})]\cap C_{L}(A_m)$ for suitable $j,m\leq s$. 

Since (\ref{relation}) holds in the group $G$, we deduce that $[L,_{\,c+1}R]=0$ for every $\gamma$-$A$-subalgebra $R$. Furthermore, using  Lemma \ref{gammaspan} one can show that for every $l\geq 1$ the $l$th term  $\gamma_{l}(L)$ of the lower central series of $L$ is spanned by the $\gamma$-$A$-subalgebras of level $l$.

Finally, we use the above remarks to prove that $L$ is  nilpotent of  $\{c,k,p\}$-bounded class. This part of the proof is pretty much the same as that of Theorem \ref{caso delta}. Since $\gamma_{k-2}(G)$ has the same nilpotency class as $L$, the theorem follows.
\end{proof}


\section{Acknowledgments} 
This research was supported by CNPq-Brazil.


\end{document}